\documentclass[11pt]{amsart}

\usepackage[T1]{fontenc}
\usepackage[utf8]{inputenc}
\usepackage{lmodern}
\usepackage{amsmath,amssymb,amsthm,mathtools}
\usepackage{microtype}
\usepackage[colorlinks=true,linkcolor=blue,citecolor=blue,urlcolor=blue]{hyperref}
\hypersetup{
  pdftitle={Height Rigidity for Entire Functions},
  pdfauthor={Diego Marques},
  pdfsubject={Arithmetic entire functions and bounded-height algebraic values},
  pdfkeywords={entire function, algebraic value, absolute height, rational point, globally subanalytic set, Pila counting theorem}
}

\newtheorem{theorem}{Theorem}[section]
\newtheorem{corollary}[theorem]{Corollary}

\newtheorem{lemma}[theorem]{Lemma}

\theoremstyle{remark}
\newtheorem{remark}[theorem]{Remark}

\newcommand{\Q}{\mathbb{Q}}
\newcommand{\R}{\mathbb{R}}
\newcommand{\C}{\mathbb{C}}
\newcommand{\Z}{\mathbb{Z}}

\newcommand{\Qbar}{\overline{\mathbb{Q}}}
\newcommand{\Aatmost}[1]{\mathcal{A}_{\leq #1}}
\newcommand{\Aexact}[1]{\mathcal{A}_{#1}}
\DeclareMathOperator{\den}{den}
\DeclareMathOperator{\RePart}{Re}
\DeclareMathOperator{\ImPart}{Im}

\title[Height rigidity for entire functions]{Height Rigidity for Entire Functions}

\author{Diego Marques}
\address{Departamento de Matem\'atica, Universidade de Bras\'ilia, Bras\'ilia, DF, Brazil}
\email{diego@mat.unb.br}

\subjclass[2020]{Primary 11J81; Secondary 03C64, 11G50, 30D20}
\keywords{Entire function, algebraic value, absolute height, rational point, globally subanalytic set, Pila's counting theorem}

\begin{document}

\begin{abstract}
We prove that algebraic values of bounded degree and polynomially bounded height are sparse on rational translates of the graph of a transcendental entire function. More precisely, for fixed $\theta\in\Qbar\cap\R$, $D\geq1$, $t>0$, and for every $\varepsilon>0$, only $O(Q^{\varepsilon})$ rationals $r\in[0,1]$ of height at most $Q$ can satisfy simultaneously $[\Q(f(\theta+r)):\Q]\leq D$ and $H(f(\theta+r))\ll H(r)^t$. Consequently, if these bounds hold for every rational $r$ of sufficiently large height, then $f\in\Qbar[z]$ and $\deg f\leq t$. As applications, we obtain rigidity results for entire functions taking rational or bounded-degree algebraic values with polynomially controlled arithmetic height. In particular, this excludes the polynomial-denominator scenario that arises naturally in connection with Mahler's problem on Liouville numbers. The proof combines Pila's bounded-degree counting theorem with standard height estimates and a simple geometric analysis of transcendental entire graphs.
\end{abstract}

\maketitle

\section{Introduction}

The arithmetic behavior of transcendental analytic functions has a long and somewhat paradoxical history. Classical interpolation constructions of Weierstrass \cite{Weierstrass1923} and St\"ackel \cite{Stackel}, discussed for instance by Waldschmidt \cite{Waldschmidt2003}, show that transcendental entire functions may take prescribed arithmetic values on large countable sets. Van der Poorten \cite{vanderPoorten1968} subsequently constructed transcendental entire functions preserving every algebraic number field. More recently, Marques and Moreira \cite{MarquesMoreira2017} proved the existence of uncountably many transcendental entire functions with rational Taylor coefficients for which both the image and the full preimage of $\Qbar$ consist of algebraic numbers, thereby answering a question of Mahler. These results emphasize that algebraicity of values alone imposes surprisingly little rigidity.

A different picture emerges once one asks for quantitative control of arithmetic complexity. For a rational number $x=a/b$ in lowest terms, set
\[
H(x)=\max\{|a|,b\},
\qquad
\den(x)=b.
\]
For a general algebraic number $\alpha$, we write $H(\alpha)$ for its absolute multiplicative Weil height and $h(\alpha)=\log H(\alpha)$ for the logarithmic height. This naturally raises a quantitative rigidity question: how many algebraic points of controlled degree and height can lie on the graph of a transcendental entire function when the inputs themselves have controlled height?

The systematic study of rational and algebraic points on transcendental graphs originates in the determinant method of Bombieri and Pila \cite{BombieriPila1989} and in the work of Pila \cite{Pila1991} on smooth transcendental graphs. It was transformed by Pila and Wilkie \cite{PilaWilkie2006} through their counting theorem for sets definable in o-minimal structures. Of particular relevance here is Pila's bounded-degree refinement \cite{Pila2009}; in the globally subanalytic case used below, it gives a subpolynomial estimate for algebraic points of fixed degree and bounded height outside the positive-dimensional semialgebraic part. Related quantitative investigations of algebraic values of analytic functions were developed by Surroca \cite{Surroca2002,Surroca2006}, Boxall and Jones \cite{BoxallJones2015a,BoxallJones2015b}, and Comte and Yomdin \cite{ComteYomdin2018}.

There is also a substantial literature on arithmetic entire functions, integer-valued functions, and rational values of analytic functions; see Pila and Rodr\'iguez Villegas \cite{PilaRodriguezVillegas1999}, Lombardo \cite{Lombardo2017}, and Waldschmidt \cite{Waldschmidt2020}.

A direction especially close to the present work originates in the arithmetic of Liouville numbers. Recall that a real number $\xi$ is called a \emph{Liouville number} if, for every integer $n\geq1$, there exist integers $p$ and $q\geq2$ such that
\[
0<\left|\xi-\frac{p}{q}\right|<\frac{1}{q^n}.
\]
Equivalently, Liouville numbers are precisely the real numbers with infinite irrationality exponent. Their exceptional rational approximation properties make the behavior of analytic functions on this set particularly delicate. A classical theorem of Maillet \cite{Maillet1906} asserts that every nonconstant rational function with rational coefficients maps Liouville numbers to Liouville numbers. Motivated by this phenomenon, Mahler \cite{Mahler1984} asked whether there exists a transcendental entire function with the same property. Marques and Moreira \cite{MarquesMoreira2015} subsequently connected this question with the arithmetic complexity of rational values of entire functions. They constructed uncountably many transcendental entire functions $f$ satisfying $f(\Q)\subseteq\Q$ and
\[
\den\!\left(f\!\left(\frac pq\right)\right)<q^{8q^2},
\]
for every rational $p/q$ with $q>1$. More importantly for the present paper, their argument shows that a positive answer to Mahler's problem would follow from the existence of a transcendental entire function $f$ with $f(\Q)\subseteq\Q$ for which
\[
\den\!\left(f\!\left(\frac pq\right)\right)=O(q^t)
\]
for some fixed $t>0$. Thus polynomial control of the denominators of rational values emerges naturally as a quantitative threshold between arithmetic interpolation and analytic rigidity.

This existence problem has since been attacked under progressively weaker hypotheses. Marques, Ramirez, and Silva \cite{MarquesRamirezSilva2016} proved that no transcendental lacunary entire function $f\in\Q[[z]]$ mapping $\Q$ into itself can satisfy $\den(f(p/q))=o(q)$; in particular, this rules out polynomial bounds $O(q^t)$ for $0<t<1$ in the rational-coefficient setting. Lelis and Marques \cite{LelisMarques2020} settled the borderline exponent $t=1$ without any assumption on the Taylor coefficients. More recently, Lelis, Marques, Moreira, and Trojovsk\'y \cite{LelisMarquesMoreiraTrojovsky2024} proved nonexistence for every polynomial exponent $t>0$ when the Taylor coefficients are rational. In sharp contrast, Lelis, Moreira, and Silva \cite{LelisMoreiraSilva2026} showed that polynomial denominator growth becomes possible in finite smoothness: for every positive integer $k$ and every $t>2k$, there are uncountably many $C^k$-functions mapping $\Q$ into itself with denominator growth $O(q^t)$, and such functions can in fact be chosen to preserve the set of Liouville numbers. They also recall the classical consequence of Pila's method that rational points of bounded height on a transcendental analytic graph are subpolynomially sparse, and conclude by asking about analogous arithmetic constraints for algebraic inputs of bounded degree, with polynomial control of the height and possibly of the degree of the corresponding values.

In the purely rational setting, Pila's rational-point estimate already suggests a direct obstruction to polynomial denominator growth. Indeed, on a compact interval an entire function is bounded, so that
\[
\den\!\left(f\!\left(\frac pq\right)\right)\ll q^t
\]
together with $f(p/q)\in\Q$ implies a polynomial bound for the height of $f(p/q)$. The quadratic abundance of rationals of height at most $Q$ is then incompatible with a subpolynomial counting estimate for a transcendental graph. The point of the present work is that this phenomenon persists in a substantially broader arithmetic setting: rational values are replaced by algebraic values of uniformly bounded degree, denominator bounds by absolute Weil-height bounds, and the resulting rigidity determines not only algebraicity of the function but also its degree.

More precisely, Corollary~\ref{cor:rational} shows that an entire function satisfying $f(\Q)\subseteq\Q$ and
\[
\den\!\left(f\!\left(\frac pq\right)\right)\ll q^t
\]
for some $t>0$ must belong to $\Q[z]$, with degree at most $t$. This rational case is a specialization of the broader height-rigidity principle established in Theorem~\ref{thm:rigidity}: algebraic values of uniformly bounded degree cannot have polynomially bounded Weil height along a rational translate unless the function is a polynomial. In particular, in the entire setting, Corollary~\ref{cor:degree} gives a negative answer to the bounded-degree polynomial-height direction suggested by Lelis, Moreira, and Silva \cite{LelisMoreiraSilva2026}: a transcendental entire function cannot map algebraic numbers of fixed or bounded degree to algebraic numbers of uniformly bounded degree while satisfying
\[
H(f(\alpha))\ll H(\alpha)^t.
\]
The obstruction is therefore genuinely height-theoretic and bounded-degree in nature, rather than a peculiarity of rational denominators.

The aim of this paper is to isolate this unified height-rigidity principle and to quantify the sparsity phenomenon from which it arises.

For an entire function $f$, a real algebraic number $\theta$, an integer $D\geq1$, and real numbers $A,t>0$, define
\[
\mathcal N_f(Q;\theta,D,t,A)
=
\#\left\{
\begin{array}{c}
r\in\Q\cap[0,1]: H(r)\leq Q,\\[2mm]
[\Q(f(\theta+r)):\Q]\leq D,\\[1mm]
H(f(\theta+r))\leq A H(r)^t
\end{array}
\right\}.
\]

Our first result is a quantitative sparsity theorem.

\begin{theorem}[Arithmetic sparsity]\label{thm:sparsity}
Let $f:\C\to\C$ be a transcendental entire function, let $\theta\in\Qbar\cap\R$, let $D\geq1$ be an integer, and let $A,t>0$. Then, for every $\varepsilon>0$,
\[
\mathcal N_f(Q;\theta,D,t,A)
\ll_{f,\theta,D,t,A,\varepsilon} Q^{\varepsilon}
\]
for all $Q\geq2$.
\end{theorem}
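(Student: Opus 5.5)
We will apply Pila's bounded-degree counting theorem \cite{Pila2009} to the compact graph
\[
X=\{(x,\RePart f(\theta+x),\ImPart f(\theta+x)) : x\in[0,1]\}\subset\R^3 .
\]
This set is globally subanalytic, since $f(\theta+\cdot)$ is real-analytic on a neighbourhood of $[0,1]$. The difficulty is that the points we want to count, $(r,f(\theta+r))$, have a complex coordinate. To put them in Pila's setting, we identify $\alpha=f(\theta+r)$ with the pair $(\RePart\alpha,\ImPart\alpha)$. If $[\Q(\alpha):\Q]\le D$, then $\bar\alpha$ has degree at most $D$, so $\RePart\alpha=(\alpha+\bar\alpha)/2$ and $\ImPart\alpha=(\alpha-\bar\alpha)/(2i)$ are real algebraic numbers of degree at most $2D^2$. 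Standard height inequalities give
\[
H(\RePart\alpha),\ H(\ImPart\alpha)\le 2^{c}H(\alpha)^2 .
\]
So each counted $r$ yields a point of $X$ whose coordinates are algebraic of degree at most $D'=2D^2$ and have height at most $A'Q^{2t}$, where $A'$ depends on $A$. Counting these points by the $\ll T^{\varepsilon}$ estimate with $T=A'Q^{2t}$ gives $Q^{2t\varepsilon}$, and rescaling $\varepsilon$ gives the claimed bound. The map $r\mapsto$ point is injective, because $r$ is the first coordinate.

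Pila's theorem gives this estimate only for points off the algebraic part $X^{\rm alg}$, the union of connected positive-dimensional semialgebraic subsets of $X$. The key geometric step is to show that $X^{\rm alg}=\emptyset$. Suppose $X$ contains a semialgebraic arc. Since the projection to $x$ of a connected semialgebraic curve in a graph is a nondegenerate interval, we would get an interval $I\subset[0,1]$ on which $u=\RePart f(\theta+x)$ and $v=\ImPart f(\theta+x)$ are both semialgebraic functions of $x$. A semialgebraic real-analytic function on an interval satisfies a nontrivial polynomial relation $P(x,u(x))=0$ with real coefficients, and similarly $R(x,v(x))=0$. Hence $f(\theta+x)=u+iv$ is algebraic over $\C(x)$ on $I$: the field $\C(x,u,v)$ is a finite extension of $\C(x)$. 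By the identity theorem, $g(z)=f(\theta+z)$ then satisfies a nontrivial polynomial equation $F(z,g(z))=0$ on all of $\C$. An entire function algebraic over $\C(z)$ is a polynomial, by the standard growth and Liouville argument: a root of $F$ grows at most polynomially as $|z|\to\infty$, away from finitely many branch points. This contradicts transcendence, so $X^{\rm alg}=\emptyset$.

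The main obstacle I expect is the bookkeeping around the degree-$D$ version of Pila's theorem. It must apply in $\R^3$ with coordinates of degree at most $D'$ and a uniform implied constant depending only on $X$, $D'$ and $\varepsilon$, which is how the globally subanalytic case is stated in \cite{Pila2009}. We also need the height transfer from $\alpha$ to its real and imaginary parts to hold uniformly. If the real/imaginary splitting proves awkward, a fallback is to count only the points $(r,\RePart\alpha)$ in the two-dimensional graph of $\RePart f(\theta+\cdot)$, and similarly for $\ImPart$. The same geometric argument applies there, since the real part of a transcendental entire function restricted to a real interval cannot be semialgebraic. Without a conjugation trick, that last claim is slightly more delicate, so I prefer the $\R^3$ version. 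The dependence of the constants on $\theta$ and $f$ enters only through $X$.
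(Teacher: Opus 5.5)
Your proposal is correct and follows essentially the paper's route: the same $\R^3$ graph built from real and imaginary parts, the same degree and height transfer, Pila's bounded-degree theorem, and the same Nash-function and identity-theorem argument that the algebraic part is empty. Using $x$ rather than $\theta+x$ as the first coordinate is a harmless variant, and it does not use $\theta\in\Qbar$. One bookkeeping fix: the first coordinate $r$ has height up to $Q$, which $A'Q^{2t}$ does not bound when $t<1/2$, so take $T\asymp Q^{\max\{1,2t\}}$, as the paper does with $s=\max\{1,2t\}$. Also drop the fallback, since its key claim is false: $f(z)=ie^z$ has $\RePart f\equiv0$ on $\R$.
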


Thus a transcendental entire graph cannot contain a polynomially dense family of algebraic points whose coordinate degrees are uniformly bounded and whose output heights grow only polynomially in the input height. Comparing this sparsity with the quadratic growth of rational points yields the main structural result.

\begin{theorem}[Height rigidity]\label{thm:rigidity}
Let $f:\C\to\C$ be an entire function and let $\theta\in\Qbar\cap\R$. Suppose that there exist an integer $D\geq1$, real numbers $A,t>0$, and $H_0\geq1$ such that, for every $r\in\Q\cap[0,1]$ with $H(r)\geq H_0$,
\begin{equation}\label{eq:intro-assumptions}
[\Q(f(\theta+r)):\Q]\leq D
\qquad\text{and}\qquad
H(f(\theta+r))\leq A H(r)^t.
\end{equation}
Then
\[
f\in\Qbar[z]
\qquad\text{and}\qquad
\deg f\leq t.
\]
\end{theorem}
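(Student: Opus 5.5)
The plan has two stages. First, Theorem~\ref{thm:sparsity} gives that $f$ is a polynomial with algebraic coefficients. Second, a growth comparison bounds the degree.

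\emph{Step 1 (polynomiality).} Suppose $f$ is transcendental. The number of $r\in\Q\cap[0,1]$ with $H(r)\leq Q$ is $\gg Q^2$; it counts fractions $a/b$ with $0\le a\le b\le Q$ and $\gcd(a,b)=1$. Only $O(H_0^2)$ of these have $H(r)<H_0$. Every remaining $r$ satisfies \eqref{eq:intro-assumptions}, so
\[
\mathcal N_f(Q;\theta,D,t,A)\gg Q^2-O(H_0^2).
\]
This contradicts Theorem~\ref{thm:sparsity} with $\varepsilon=1$ once $Q$ is large. Hence $f$ is a polynomial, say of degree $n$.

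To see that its coefficients are algebraic, pick $n+1$ distinct rationals $r_0,\dots,r_n\in[0,1]$ of height at least $H_0$. Each value $f(\theta+r_i)$ is algebraic, and each point $\theta+r_i$ lies in $\Q(\theta)$. Lagrange interpolation recovers $f$ from these values, so all coefficients lie in the field generated by $\theta$ and the $f(\theta+r_i)$. Thus $f\in\Qbar[z]$. The case where $f$ is constant is trivial for the degree bound.

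\emph{Step 2 (degree bound).} Write $g(z)=f(\theta+z)$, which lies in $K[z]$ for a number field $K$ containing $\theta$ and the coefficients of $f$, and has $\deg g=n$. The aim is a lower bound
\[
H(g(r))\gg H(r)^{n}
\]
along a suitable sequence of rationals $r$, for instance $r=1/q$ with $q\to\infty$. Combined with $H(g(1/q))\leq A q^t$, this forces $n\leq t$.

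For the lower bound I would use the standard comparison $h(g(\alpha))=n\,h(\alpha)+O_g(1)$ for $\alpha\in\Qbar$. This follows from functoriality of heights under the morphism $\mathbb P^1\to\mathbb P^1$, $[x:y]\mapsto[y^n g(x/y):y^n]$, which is a morphism of degree $n$ because $g$ has degree exactly $n$ and so the two forms have no common zero. Exponentiating gives $H(g(r))\geq c\,H(r)^n$ for every rational $r$. Taking $r=1/q$ with $q\geq H_0$ then yields $c\,q^n\leq Aq^t$, hence $n\leq t$.

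\emph{Main obstacle.} I expect the delicate point to be the height comparison $h(g(\alpha))\ge n h(\alpha)-O(1)$, since the coefficients of $g$ are algebraic rather than rational. One needs the lower bound of the functoriality statement, not merely the easy upper bound. I would invoke it directly as the standard Northcott--Weil height machine fact. As a self-contained alternative, I would compute at $r=1/q$ place by place. At each prime $p\mid q$, a $p$-adic place above $p$ with $|1/q|_v>1$ gives $|g(1/q)|_v=|a_n|_v|1/q|_v^{n}$ for $q$ large, once the leading term dominates; this holds for all but finitely many $p$, and the remaining $p$ are handled uniformly. Summing over the places dividing $q$ gives $H(g(1/q))\gg q^{n}$.
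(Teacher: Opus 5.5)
Your proposal is correct and follows the paper's proof essentially step for step. The sparsity theorem played against the $\gg Q^2$ count of rationals forces $f$ to be a polynomial, interpolation at $n+1$ translates $\theta+r_i$ gives algebraic coefficients, and the functorial estimate $h(P(\alpha))=m\,h(\alpha)+O_P(1)$ bounds the degree. The only cosmetic difference is that you apply functoriality directly to $g(z)=f(\theta+z)$ along $r=1/q$, whereas the paper applies it to $f$ at $\theta+r$ and then invokes the translate comparison $h(\theta+r)\geq h(r)-h(\theta)-\log 2$.
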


The exponent in Theorem~\ref{thm:rigidity} is sharp: a polynomial $P$ of degree $m$ satisfies
\[
h(P(\theta+r))=m h(r)+O_{P,\theta}(1)
\]
along a rational translate. Thus the conclusion $\deg f\leq t$ cannot be improved. Two immediate applications recover the rational denominator problem and settle the corresponding bounded-degree formulation.

\begin{corollary}[Polynomial denominator growth]\label{cor:rational}
Let $f:\C\to\C$ be an entire function such that $f(\Q)\subseteq\Q$. Assume that there exist $A,t>0$ and $q_0\geq1$ such that
\[
\den\!\left(f\!\left(\frac pq\right)\right)
\leq A q^t
\]
for every reduced rational number $p/q$ with $q\geq q_0$. Then
\[
f\in\Q[z]
\qquad\text{and}\qquad
\deg f\leq t.
\]
In particular, no transcendental entire function has these properties.
\end{corollary}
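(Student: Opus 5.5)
The plan is to deduce Corollary~\ref{cor:rational} from Theorem~\ref{thm:rigidity} with $\theta=0$ and $D=1$. The degree condition is automatic: since $f(\Q)\subseteq\Q$, we have $[\Q(f(r)):\Q]=1$ for every rational $r$. The only thing to check is the height bound $H(f(r))\leq A' H(r)^{t}$ for $r\in\Q\cap[0,1]$ with $H(r)$ large, for a suitable constant $A'$.

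The key steps are as follows.

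\begin{enumerate}
\item Take $r=p/q\in[0,1]$ in lowest terms. Then $0\le p\le q$, so $H(r)=q$ (the cases $r=0,1$ only matter for small height and are excluded by taking $H_0$ large).
\item Write $f(r)=a/b$ in lowest terms, so $b=\den(f(r))$. For $q\geq q_0$ the hypothesis gives $b\leq Aq^t$.
\item Since $f$ is entire, it is continuous on the compact interval $[0,1]$. Hence $M:=\max_{[0,1]}|f|<\infty$, and $|a|=|f(r)|\,b\leq M A q^t$.
\item Therefore
\[
H(f(r))=\max\{|a|,b\}\leq \max\{M,1\}\,A\,q^t=A'H(r)^t,
\]
with $A'=\max\{M,1\}A$.
\end{enumerate}

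For a rational number, the absolute Weil height coincides with $\max\{|a|,b\}$, so the notations agree. Choosing $H_0=\max\{q_0,2\}$, the hypotheses \eqref{eq:intro-assumptions} of Theorem~\ref{thm:rigidity} hold.

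Theorem~\ref{thm:rigidity} then gives $f\in\Qbar[z]$ with $\deg f\leq t$. It remains to upgrade the coefficients from $\Qbar$ to $\Q$. Let $m=\deg f$ and choose $m+1$ distinct rationals $x_0,\dots,x_m$. The values $f(x_i)$ are all in $\Q$. Lagrange interpolation expresses $f$ as the unique polynomial of degree at most $m$ through the points $(x_i,f(x_i))$. This interpolating polynomial has rational coefficients, so $f\in\Q[z]$. The final sentence of the corollary then follows immediately, since a polynomial is not transcendental.

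There is no real obstacle here. The only points needing care are the identification $H(p/q)=q$ on $[0,1]$ and the passage from a denominator bound to a full height bound. That passage uses boundedness of $f$ on $[0,1]$, which is exactly why the theorem restricts to $r\in[0,1]$.
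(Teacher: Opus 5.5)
Your proposal is correct and follows essentially the same route as the paper. You bound $H(f(p/q))\leq \max\{M,1\}Aq^t$ using boundedness of $f$ on $[0,1]$, apply Theorem~\ref{thm:rigidity} with $\theta=0$ and $D=1$, and then recover rational coefficients by interpolation at rational nodes; the paper does the same, using the Vandermonde system at $0,1,\ldots,m$, and also notes the trivial case $f=0$, which you omit.
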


To formulate the bounded-degree consequences, we distinguish the two conventions that are often denoted informally by $\Q_d$:
\[
\Aatmost{d}
:=\{\alpha\in\Qbar:[\Q(\alpha):\Q]\leq d\},
\qquad
\Aexact{d}
:=\{\alpha\in\Qbar:[\Q(\alpha):\Q]=d\}.
\]

\begin{corollary}[Algebraic numbers of bounded or fixed degree]\label{cor:degree}
Let $d,D\geq1$ be integers, let $f:\C\to\C$ be an entire function, and suppose that, for some $A,t>0$ and $H_0\geq1$,
\[
H(f(\alpha))\leq A H(\alpha)^t
\]
whenever $\alpha$ belongs to the relevant source set below and $H(\alpha)\geq H_0$.

\begin{enumerate}
\item If $f(\Aatmost{d})\subseteq\Aatmost{D}$, then $f\in\Qbar[z]$ and $\deg f\leq t$.
\item If $f(\Aexact{d})\subseteq\Aatmost{D}$, then $f\in\Qbar[z]$ and $\deg f\leq t$.
\end{enumerate}

In particular, neither $\Aatmost{d}$ nor $\Aexact{d}$ can be preserved under a polynomial height bound by a transcendental entire function.
\end{corollary}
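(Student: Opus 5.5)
Both parts reduce to Theorem~\ref{thm:rigidity} by building a real algebraic $\theta$ such that $\theta+r$ lies in the source set for all $r\in\Q\cap[0,1]$, and such that $H(\theta+r)$ is polynomially controlled by $H(r)$. In both parts I take $\theta$ to be a real algebraic number of exact degree $d$, for instance $\theta=2^{1/d}$; for $d=1$ I take $\theta=0$.

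The first step is the height comparison. For rational $r$ we have $\Q(\theta+r)=\Q(\theta)$, so $[\Q(\theta+r):\Q]=d$. Hence $\theta+r\in\Aexact{d}\subseteq\Aatmost{d}$, and one choice of $\theta$ serves both the bounded-degree and the exact-degree source sets. The standard estimates $H(\alpha+\beta)\leq 2H(\alpha)H(\beta)$ and $H(\alpha)=H(-\alpha)$ give
\[
H(\theta+r)\leq 2H(\theta)H(r),
\qquad
H(r)\leq 2H(\theta)H(\theta+r).
\]
The second inequality is used to convert the condition $H(\theta+r)\geq H_0$ into a condition on $H(r)$. Since $H(r)\geq H_0':=2H(\theta)H_0$ implies $H(\theta+r)\geq H_0$, every $r$ with $H(r)\geq H_0'$ satisfies the hypotheses. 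For such $r$ we get $[\Q(f(\theta+r)):\Q]\leq D$ and
\[
H(f(\theta+r))\leq A\,H(\theta+r)^t\leq A(2H(\theta))^t H(r)^t.
\]

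Theorem~\ref{thm:rigidity} then applies with constants $A'=A(2H(\theta))^t$, the same $t$, and threshold $H_0'$. It yields $f\in\Qbar[z]$ and $\deg f\leq t$. The statement about $\Aatmost{d}$ and $\Aexact{d}$ not being preserved follows at once, because the conclusion forces $f$ to be a polynomial.

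The only point needing care is the height-comparison step, and in particular using the reverse inequality so that the threshold $H_0$ is met. Without it, the restriction to $\alpha$ with $H(\alpha)\geq H_0$ could not be transferred to the condition $H(r)\geq H_0'$ required by Theorem~\ref{thm:rigidity}. Everything else is direct substitution.
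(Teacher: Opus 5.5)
Your proposal is correct and is essentially the paper's proof. You choose a real algebraic $\theta$ of exact degree $d$, note that $\Q(\theta+r)=\Q(\theta)$, use the two-sided comparison $H(r)/(2H(\theta))\leq H(\theta+r)\leq 2H(\theta)H(r)$ to transfer both the threshold and the height bound, and invoke Theorem~\ref{thm:rigidity}. The only difference is that the paper handles part (1) more directly with $\theta=0$, since rationals already lie in the bounded-degree source set, whereas you reuse the degree-$d$ translate for both parts; this is equally valid.
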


The paper is organized as follows. Section~\ref{S2} collects the required height estimates, Pila's bounded-degree counting theorem, and the geometric fact that the compact graph associated with a nonpolynomial entire function is globally subanalytic and contains no connected positive-dimensional semialgebraic subset. In Section~\ref{S3}, the resulting $O(Q^\varepsilon)$ upper bound is compared with the $\gg Q^2$ rational points of bounded height to prove the main rigidity theorem and its applications; we conclude with the sharpness of the degree bound.

\section{Preliminary results}\label{S2}

Throughout, $\Qbar$ denotes the algebraic closure of $\Q$ in $\C$, and
$\deg\alpha=[\Q(\alpha):\Q]$ for $\alpha\in\Qbar$. For nonnegative quantities
$X$ and $Y$, we write $X\ll Y$, or equivalently $X=O(Y)$, if $X\leq CY$
for some constant $C>0$. A subscript indicates the permitted dependence
of the implied constant; thus, for instance, $X\ll_{\kappa}Y$ means that
$C$ may depend on $\kappa$. We write $X\gg Y$ for $Y\ll X$ and
$X=o(Y)$ if $X/Y\to0$ in the indicated limit. Implied constants may change
from line to line, $\#E$ denotes the cardinality of a finite set $E$, and
all logarithms are natural.

\subsection{Heights and algebraic degree}

We use the absolute \textit{multiplicative Weil height} $H$ on $\Qbar$. If
$\alpha\in\Qbar$ has degree $d$ and primitive minimal polynomial
\[
P_\alpha(X)=a_d\prod_{j=1}^{d}(X-\alpha_j)\in\Z[X],
\qquad a_d>0,
\]
then
\[
H(\alpha)=
\left(a_d\prod_{j=1}^{d}\max\{1,|\alpha_j|\}\right)^{1/d}.
\]
We write $h=\log H$ and, for the zero polynomial, we adopt the convention $\deg 0=-\infty$. In particular,
$H(p/q)=\max\{|p|,q\}$ for $p/q\in\Q$ in lowest terms. We shall use the standard inequalities
\begin{equation}\label{eq:height-basic}
H(\alpha\beta)\leq H(\alpha)H(\beta),
\qquad
H(\alpha+\beta)\leq2H(\alpha)H(\beta),
\end{equation}
together with $H(\sigma(\alpha))=H(\alpha)$ for every $\Q$-embedding
$\sigma:\Qbar\hookrightarrow\C$. See \cite[Chapter~1]{BombieriGubler2006}.

\begin{lemma}\label{lem:translate-height}
Let $\theta\in\Qbar$ be fixed. For every $r\in\Q$,
\[
[\Q(\theta+r):\Q]=[\Q(\theta):\Q]
\]
and
\begin{equation}\label{eq:translate-comparison}
\frac{H(r)}{2H(\theta)}
\leq H(\theta+r)
\leq2H(\theta)H(r).
\end{equation}
Equivalently,
\[
|h(\theta+r)-h(r)|
\leq h(\theta)+\log2.
\]
\end{lemma}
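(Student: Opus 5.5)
The plan is to treat the two assertions of Lemma~\ref{lem:translate-height} separately. Both follow from the fact that translation by a rational number acts invertibly on $\Qbar$ and preserves fields, combined with the basic height inequalities \eqref{eq:height-basic}.

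\emph{Degree.} Since $r\in\Q$, the field $\Q(\theta+r)$ contains $(\theta+r)-r=\theta$. Conversely, $\Q(\theta)$ contains $\theta+r$. Hence $\Q(\theta+r)=\Q(\theta)$, and the two degrees coincide. No computation with minimal polynomials is needed.

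\emph{Upper bound.} Apply the second inequality in \eqref{eq:height-basic} with $\alpha=\theta$ and $\beta=r$. This gives directly
\[
H(\theta+r)\le 2H(\theta)H(r).
\]

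\emph{Lower bound.} Write $r=(\theta+r)+(-\theta)$ and apply \eqref{eq:height-basic} again:
\[
H(r)\le 2H(\theta+r)H(-\theta).
\]
It remains to justify $H(-\theta)=H(\theta)$. I would read this off the definition. If $P_\theta(X)=a_d\prod_j(X-\theta_j)$, then the primitive minimal polynomial of $-\theta$ is $(-1)^dP_\theta(-X)=a_d\prod_j(X+\theta_j)$. It has the same leading coefficient $a_d>0$, and its roots $-\theta_j$ have the same absolute values as the $\theta_j$, so the defining product is unchanged. Alternatively, one can use $H(-1)=1$ and multiplicativity. Rearranging the displayed inequality gives $H(r)/(2H(\theta))\le H(\theta+r)$.

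\emph{Logarithmic form.} Taking logarithms of the two bounds yields
\[
-h(\theta)-\log2\le h(\theta+r)-h(r)\le h(\theta)+\log 2,
\]
which is exactly the stated inequality for $|h(\theta+r)-h(r)|$.

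There is no genuine obstacle here. The only point requiring a moment's care is the symmetry $H(-\theta)=H(\theta)$, and the normalization with positive leading coefficient $a_d$ handles it.
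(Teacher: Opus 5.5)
Your proposal is correct and follows the paper's proof: the degree claim from $\Q(\theta+r)=\Q(\theta)$, the upper bound from \eqref{eq:height-basic}, and the lower bound from writing $r=(\theta+r)-\theta$. Your explicit check that $H(-\theta)=H(\theta)$ fills in a step the paper uses implicitly.
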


\begin{proof}
The degree assertion follows immediately. The upper bound in \eqref{eq:translate-comparison} is an immediate consequence of \eqref{eq:height-basic}. Since $r=(\theta+r)-\theta$, the same inequality gives
\[
H(r)\leq2H(\theta+r)H(\theta),
\]
which is the lower bound.
\end{proof}

Because the graph of a complex-valued function on a real interval is naturally embedded in $\R^3$, we need degree and height estimates for real and imaginary parts.

\begin{lemma}\label{lem:real-imaginary}
Let $\beta\in\Qbar$ and assume that $[\Q(\beta):\Q]\leq D$. Then
\[
[\Q(\RePart\beta):\Q]\leq D^2,
\qquad
[\Q(\ImPart\beta):\Q]\leq2D^2,
\]
and
\[
H(\RePart\beta)\leq4H(\beta)^2,
\qquad
H(\ImPart\beta)\leq4H(\beta)^2.
\]
\end{lemma}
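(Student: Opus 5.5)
The natural route is to write $\RePart\beta=(\beta+\bar\beta)/2$ and $\ImPart\beta=(\beta-\bar\beta)/(2i)$. Here $\bar\beta$ is the complex conjugate, which is a $\Q$-conjugate of $\beta$: complex conjugation restricts to a $\Q$-embedding $\Q(\beta)\hookrightarrow\C$. All the estimates then follow from the degree behaviour of compositum fields and from the basic height inequalities \eqref{eq:height-basic}, together with invariance of $H$ under embeddings.

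\emph{Degree bounds.} Since $\bar\beta$ is a root of the minimal polynomial of $\beta$, we have $[\Q(\bar\beta):\Q]=[\Q(\beta):\Q]\le D$. Hence the compositum satisfies
\[
[\Q(\beta,\bar\beta):\Q]\le [\Q(\beta):\Q]\,[\Q(\bar\beta):\Q]\le D^2 .
\]
Now $\RePart\beta\in\Q(\beta,\bar\beta)$, which gives $[\Q(\RePart\beta):\Q]\le D^2$. For the imaginary part, $\ImPart\beta\in\Q(\beta,\bar\beta,i)$, and adjoining $i$ at most doubles the degree, so $[\Q(\ImPart\beta):\Q]\le 2D^2$.

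\emph{Height bounds.} Invariance under embeddings gives $H(\bar\beta)=H(\beta)$. Then:
\begin{itemize}
\item For the real part, \eqref{eq:height-basic} gives $H(\beta+\bar\beta)\le 2H(\beta)^2$, and $H(1/2)=2$. This yields the crude bound $H(\RePart\beta)\le 2\cdot 2H(\beta)^2=4H(\beta)^2$.
\item For the imaginary part, note $H(i)=1$, since $i$ is a root of $X^2+1$ and both roots have absolute value $1$. Also $H(-\bar\beta)=H(\bar\beta)$, because $H(-1)=1$. So $H(\beta-\bar\beta)\le 2H(\beta)^2$, and multiplying by $1/(2i)$, which has height $H(1/2)H(1/i)=2$, gives $H(\ImPart\beta)\le 4H(\beta)^2$.
\end{itemize}
Here I use $H(\gamma^{-1})=H(\gamma)$ for $\gamma\neq0$. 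This follows from the product formula and is standard in \cite[Chapter~1]{BombieriGubler2006}.

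\emph{Main obstacle.} The only delicate point is justifying that complex conjugation gives a $\Q$-embedding, so that $H(\bar\beta)=H(\beta)$ and $\deg\bar\beta=\deg\beta$. This is immediate because the minimal polynomial of $\beta$ has real coefficients. Everything else is bookkeeping with the multiplicativity of degrees in towers and \eqref{eq:height-basic}. The constant $4$ is not optimal; it only needs to be large enough to absorb the factor $2$ from the addition inequality and the factor $H(1/2)=2$.
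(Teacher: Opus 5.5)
Your proof is correct and follows the paper's argument essentially verbatim. Like the paper, you write $\RePart\beta=(\beta+\overline\beta)/2$ and $\ImPart\beta=(\beta-\overline\beta)/(2i)$, bound the degrees via $\Q(\beta,\overline\beta)$ and $\Q(\beta,\overline\beta,i)$, and bound the heights using \eqref{eq:height-basic}, $H(\overline\beta)=H(\beta)$ and $H(1/2)=H(1/(2i))=2$. One small imprecision: submultiplicativity only gives $H(1/(2i))\leq H(1/2)H(1/i)=2$, not equality, but that upper bound is all you need (and in fact $H(1/(2i))=2$, since $1/(2i)$ is a root of $4X^2+1$).
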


\begin{proof}
Complex conjugation preserves algebraic degree and height. Since
\[
\RePart\beta=\frac{\beta+\overline\beta}{2},
\qquad
\ImPart\beta=\frac{\beta-\overline\beta}{2i},
\]
we have
\[
\Q(\RePart\beta)\subseteq\Q(\beta,\overline\beta),
\qquad
\Q(\ImPart\beta)\subseteq\Q(\beta,\overline\beta,i).
\]
The claimed degree bounds follow because
\[
[\Q(\beta,\overline\beta):\Q]\leq D^2
\]
and $[\Q(i):\Q]=2$. Moreover, \eqref{eq:height-basic}, together with $H(1/2)=H(1/(2i))=2$, gives
\[
H(\RePart\beta)
\leq H(1/2)H(\beta+\overline\beta)
\leq4H(\beta)^2,
\]
and the same argument applies to $\ImPart\beta$.
\end{proof}

We shall also use the standard functorial height estimate for polynomials.

\begin{lemma}\label{lem:polynomial-height}
Let $P\in\Qbar[z]$ be a nonconstant polynomial of degree $m$. Then
\begin{equation}\label{eq:polynomial-height}
h(P(\alpha))=m h(\alpha)+O_P(1)
\end{equation}
for every $\alpha\in\Qbar$.
\end{lemma}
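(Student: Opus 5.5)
The plan is to prove \eqref{eq:polynomial-height} place by place, working with local contributions to the height; this is the classical argument (see \cite[Chapter~1]{BombieriGubler2006}). Write $P(z)=a_mz^m+\dots+a_0$ with $a_m\neq0$. Given $\alpha\in\Qbar$, fix a number field $K$ containing $\alpha$ and $a_0,\dots,a_m$, and use
\[
h(\beta)=\sum_{v\in M_K}\frac{[K_v:\Q_v]}{[K:\Q]}\log\max\{1,|\beta|_v\}
\]
for $\beta\in K$. Since the normalised sum is independent of $K$, it suffices to find constants $c_v\geq0$ and $c'_v\geq0$ such that the following hold. They may depend only on $P$ and on the place of $\Q(a_0,\dots,a_m)$ below $v$, and they must vanish for all but finitely many such places.
\[
\log\max\{1,|P(\alpha)|_v\}\leq m\log\max\{1,|\alpha|_v\}+c_v,
\]
\[
m\log\max\{1,|\alpha|_v\}\leq \log\max\{1,|P(\alpha)|_v\}+c'_v.
\]
Summing with the weights $[K_v:\Q_v]/[K:\Q]$ then gives $|h(P(\alpha))-mh(\alpha)|\leq C_P$, with $C_P$ independent of $\alpha$ and $K$. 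This is the required bound.

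\emph{Upper bound.} At every place we have $|P(\alpha)|_v\leq \epsilon_v\max_i|a_i|_v\max\{1,|\alpha|_v\}^m$. Here $\epsilon_v=m+1$ if $v$ is archimedean, and $\epsilon_v=1$ otherwise, by the ultrametric inequality. So we can take $c_v=\log^+(\epsilon_v\max_i|a_i|_v)$. At nonarchimedean places where all $a_i$ are $v$-integral this is $0$, and that excludes only finitely many places. Summed, this gives $h(P(\alpha))\leq mh(\alpha)+h(a_0:\dots:a_m)+\log(m+1)$.

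\emph{Lower bound.} This is the only step needing a case split. For each $v$ we define a threshold $R_v\geq1$. If $v$ is archimedean, set $R_v=\max\{1,\,2\sum_{i<m}|a_i|_v/|a_m|_v\}$. If $v$ is nonarchimedean, set $R_v=\max\{1,\max_{i<m}|a_i/a_m|_v\}$. In the latter case $R_v=1$ whenever all $a_i$ are integral and $a_m$ is a unit at $v$, which again holds for all but finitely many places.

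If $|\alpha|_v>R_v$, then the leading term dominates. In the archimedean case $|P(\alpha)|_v\geq\tfrac12|a_m|_v|\alpha|_v^m$. In the nonarchimedean case the strict triangle inequality gives $|P(\alpha)|_v=|a_m|_v|\alpha|_v^m$. Either way,
\[
m\log|\alpha|_v\leq\log\max\{1,|P(\alpha)|_v\}+\log^+\!\big(2/|a_m|_v\big).
\]
If instead $|\alpha|_v\leq R_v$, then trivially $m\log\max\{1,|\alpha|_v\}\leq m\log R_v$. So $c'_v=m\log R_v+\log^+(2/|a_m|_v)$ works, and $c'_v=0$ at almost all places, which is exactly what the summation step requires.

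I expect no real obstacle. The one point to handle carefully is the lower bound: the constants must be uniform in $\alpha$ and almost all zero, and this is what the choice of $R_v$ above ensures.
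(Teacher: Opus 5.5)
Your argument is sound and proves the lemma, but it takes a different route from the paper. The paper does no local computation. It observes that $P$ extends to a morphism $\mathbb P^1\to\mathbb P^1$ of degree $m$, concretely $[x:y]\mapsto[y^mP(x/y):y^m]$, which is base-point free because $a_m\neq0$. It then quotes the general functoriality $h(\phi(x))=m\,h(x)+O_\phi(1)$ from Bombieri--Gubler. Your proof is essentially that theorem's proof written out for this special case. The upper bound is the same triangle-inequality estimate. For the lower bound, the general proof uses a Nullstellensatz identity that expresses a power of each coordinate as a combination of the defining forms; you replace this with your explicit threshold $R_v$, beyond which the leading term dominates. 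Your route is elementary and self-contained, and it yields an explicit $C_P$. The paper's route is a one-line citation that works for arbitrary morphisms.

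Two bookkeeping points need fixing.

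First, in the lower bound you merged the archimedean and nonarchimedean cases into the single constant $\log^+(2/|a_m|_v)$. This breaks exactly the property you emphasize. At every nonarchimedean $v$ with $|a_m|_v=1$ this quantity equals $\log 2$, not $0$. So with $c'_v=m\log R_v+\log^+(2/|a_m|_v)$ the claim that $c'_v=0$ at almost all places is false, and the weighted sum over $M_K$ diverges, since each rational prime contributes $\log 2$. Your own nonarchimedean computation gives the exact equality $|P(\alpha)|_v=|a_m|_v|\alpha|_v^m$. So at those places the constant should be $\log^+(1/|a_m|_v)$, which vanishes whenever $a_m$ is a $v$-unit. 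Keep the factor $2$ only at archimedean places.

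Second, the summed upper-bound constant is $\log(m+1)+\sum_{v}\frac{[K_v:\Q_v]}{[K:\Q]}\log^+\max_i|a_i|_v=\log(m+1)+h(1:a_0:\cdots:a_m)$. This is the affine height of the coefficient vector, not the projective height $h(a_0:\cdots:a_m)$. With the projective height your stated inequality already fails for $P(z)=Nz$ and $\alpha=1$. This second slip does not affect the lemma, which only needs some constant depending on $P$.

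With these corrections your proof is complete.
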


\begin{proof}
The polynomial $P$ extends, by setting $P(\infty)=\infty$, to a morphism
\[
\mathbb P^1\longrightarrow\mathbb P^1
\]
of degree $m$. The assertion is therefore the standard functorial height
estimate
\[
h(P(\alpha))=m h(\alpha)+O_P(1);
\]
see Bombieri and Gubler \cite[Chapter~1]{BombieriGubler2006}.
\end{proof}

\subsection{Pila's bounded-degree counting theorem}

For $X\subseteq\R^n$, an integer $k\geq1$, and $T\geq1$, define
\[
X(k,T):=
\left\{
(x_1,\ldots,x_n)\in X:
[\Q(x_j):\Q]\leq k,\;
H(x_j)\leq T
\text{ for }1\leq j\leq n
\right\}.
\]

Recall that a subset of $\R^n$ is \emph{semialgebraic} if it can be
described by finitely many polynomial equalities and inequalities, combined
using finite unions, intersections, and complements. We shall also use
the standard fact that the graph of a real-analytic map restricted to a
compact interval is \textit{globally subanalytic}; see van den Dries
\cite{vandenDries1998}.

\begin{theorem}[Pila]\label{thm:pila}
Let $X\subseteq\R^n$ be globally subanalytic and suppose that $X$ contains
no connected positive-dimensional semialgebraic subset. Then, for every
integer $k\geq1$ and every $\varepsilon>0$,
\[
\#X(k,T)\ll_{X,k,\varepsilon}T^\varepsilon
\]
for every $T\geq1$.
\end{theorem}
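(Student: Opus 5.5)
This is Pila's theorem, quoted from \cite{Pila2009}, and we would not reprove it in the paper. If one wanted to reconstruct it, the plan would be to reduce counting algebraic points of bounded degree on $X$ to counting rational points on an auxiliary globally subanalytic set, and then apply the Pila--Wilkie counting theorem \cite{PilaWilkie2006}.

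\emph{Encoding step.} A real algebraic number $x$ with $[\Q(x):\Q]\leq k$ and $H(x)\leq T$ is a root of a primitive integer polynomial of degree $\leq k$. By Mahler's inequality its coefficients are bounded by $2^kT^k$, so the coefficient vector in $\Q^{k+1}$ has height $\ll_k T^k$. We would then define the auxiliary set
\[
Y=\{(a^{(1)},\ldots,a^{(n)},x)\in\R^{n(k+1)}\times X:\ a^{(j)}\neq0,\ \textstyle\sum_i a^{(j)}_i x_j^i=0\ \text{for }1\leq j\leq n\}.
\]
To keep $Y$ bounded, we would normalise the coefficient vectors projectively, for instance by scaling into $[-1,1]^{k+1}$; the rescaled vectors are still rational of height $\ll T^k$. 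The set $Y$ is globally subanalytic. Let $\pi$ be the projection of $Y$ onto the coefficient space. Each point of $X(k,T)$ arises from at least one rational point of $\pi(Y)$ of height $\ll T^k$. Conversely, each coefficient point has at most $k^n$ preimages, since each polynomial has at most $k$ roots. Hence $\#X(k,T)\leq k^n\,\#\pi(Y)(1,cT^k)$.

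\emph{Counting step.} Pila--Wilkie gives $\#\pi(Y)^{\mathrm{tr}}(1,cT^k)\ll_\varepsilon T^{k\varepsilon}$, where the superscript $\mathrm{tr}$ denotes the transcendental part, that is, the set with all connected positive-dimensional semialgebraic subsets removed. Renaming $\varepsilon$ then gives the desired bound for the points coming from the transcendental part.

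\emph{Algebraic-part step.} This is the main obstacle: we must show that rational points in the algebraic part of $\pi(Y)$ contribute nothing beyond $O(T^\varepsilon)$. Suppose $S\subseteq\pi(Y)$ is a connected positive-dimensional semialgebraic set. The roots of the polynomials $\sum_i a^{(j)}_iz^i$ are semialgebraic functions of $a\in S$. After a semialgebraic cell decomposition of $S$ into cells on which the relevant root branches are continuous, each cell maps onto a connected semialgebraic subset of $X$. By hypothesis every such image must be a single point. So on each such cell, the root tuple $x$ is constant while the coefficients vary; this can only happen by rescaling, or through the presence of extra roots that are not used.

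We would handle this by a uniform definable-family version of the counting theorem. Here one uses Pila's ``blocks'' formulation, in which rational points lie in $\ll T^\varepsilon$ semialgebraic blocks of bounded complexity. Each block then contributes at most $k^n$ points $x$, because on a block the tuple $x$ takes boundedly many values. Making this uniformity precise, which is the substance of \cite{Pila2009}, is the delicate part of the argument. We would cite it rather than redo it.
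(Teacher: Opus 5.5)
Citing \cite{Pila2009} is exactly what the paper does. Its entire justification is that Theorem~1.6 of \cite{Pila2009} bounds the bounded-degree points on $X^{\mathrm{tr}}$, which is $X$ minus the union of its connected positive-dimensional semialgebraic subsets. The hypothesis says precisely that this union is empty, so $X^{\mathrm{tr}}=X$. You should state that one-line reduction explicitly, since it is the only content of the proof here.

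Your reconstruction sketch, however, would not go through as written, for two reasons.

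\emph{The counting step is vacuous for $k\geq2$.} Let $a\in\pi(Y)$ with witness $x\in X$. Every suitably normalized tuple $a'$ with $\sum_i a'^{(j)}_i x_j^i=0$ for all $j$ also lies in $\pi(Y)$. These tuples form a connected semialgebraic set of dimension at least $n(k-1)\geq1$ containing $a$; under sup-norm normalization it is a product of $(k-1)$-spheres. Hence $\pi(Y)^{\mathrm{tr}}=\emptyset$, and every point lives in the algebraic part. Moreover, $\#\pi(Y)(1,cT^k)$ is typically of polynomial size: if $X$ contains a rational point, the fibre above it contains polynomially many rational points. So the inequality $\#X(k,T)\leq k^n\,\#\pi(Y)(1,cT^k)$ is a dead end.

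\emph{The block claim is false.} It is not true that the root tuple $x$ takes boundedly many values on a block. Take $n=k=2$, $X=\{(s,e^s):0\leq s\leq1\}$, and $x^0=(s_0,e^{s_0})\in X$. The connected semialgebraic set of pairs of quadratics vanishing at $s_0$ and at $e^{s_0}$ respectively lies in $\pi(Y)$. For every $s\in[0,1]$ it contains $\bigl((Z-s_0)(Z-s),(Z-e^{s_0})(Z-e^{s})\bigr)$, whose root tuples in $X$ include $(s,e^s)$. So the root tuples over this set sweep out all of $X$.

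What the hypothesis on $X$ actually gives is weaker. On top-dimensional pieces of a block $B$, a root branch landing in $X$ is locally semialgebraic, hence locally constant; passing to closures, $B$ lies in a finite union of fibres $F_\xi=\{a: a^{(j)}(\xi_j)=0\ \text{for all } j\}$, $\xi\in\Xi_B$, with $\#\Xi_B$ uniformly bounded.

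The missing idea is irreducibility. You must count only coefficient points that are tuples of minimal polynomials. If such a tuple lies in $F_\xi$, each irreducible $a^{(j)}$ vanishes at $\xi_j$, so the tuple is the normalized minimal-polynomial tuple of $\xi$. This gives $O(\#\Xi_B\,k^n)$ points per block, and combined with Pila's blocks theorem, the $T^{\varepsilon}$ bound.
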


This is the only form of Pila's bounded-degree counting theorem needed
here. It follows immediately from \cite[Theorem~1.6]{Pila2009}, since under
the stated hypothesis the algebraic part occurring in the general theorem
is empty.

\subsection{Transcendental entire graphs}

For an entire function $f$ and $\theta\in\R$, put
\[
\Gamma_{f,\theta}
:=
\{(\theta+x,u(\theta+x),v(\theta+x)):0\leq x\leq1\},
\]
where, for real $x$,
\[
u(x)=\RePart f(x),
\qquad
v(x)=\ImPart f(x).
\]

\begin{lemma}\label{lem:graph}
The set $\Gamma_{f,\theta}$ is globally subanalytic. If $f$ is not a
polynomial, then $\Gamma_{f,\theta}$ contains no connected
positive-dimensional semialgebraic subset.
\end{lemma}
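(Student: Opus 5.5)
Two things need proving: global subanalyticity of $\Gamma_{f,\theta}$, and the absence of connected positive-dimensional semialgebraic subsets when $f$ is not a polynomial.

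\emph{Global subanalyticity.} On $\R$ the functions $u(x)=\RePart f(x)$ and $v(x)=\ImPart f(x)$ are real-analytic. Indeed, writing $f(z)=\sum a_nz^n$, we have $u(x)=\sum(\RePart a_n)x^n$ and $v(x)=\sum(\ImPart a_n)x^n$, and both series converge on all of $\R$. Hence the map $x\mapsto(\theta+x,u(\theta+x),v(\theta+x))$ is real-analytic on a neighbourhood of $[0,1]$. The graph of a real-analytic map restricted to a compact interval is globally subanalytic by the standard fact quoted from van den Dries, and $\Gamma_{f,\theta}$ is exactly such a graph.

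\emph{No semialgebraic pieces.} Suppose $S\subseteq\Gamma_{f,\theta}$ is connected, semialgebraic and of positive dimension. Its projection to the first coordinate is a connected semialgebraic subset of $[\theta,\theta+1]$. Because $\Gamma_{f,\theta}$ is a graph over this coordinate, the projection is injective on $S$, so it preserves dimension and the image is a nondegenerate interval $I=[a,b]$ with $a<b$. Over $I$ the set $S$ coincides with $\Gamma_{f,\theta}$, so $u|_I$ and $v|_I$ have semialgebraic graphs. A semialgebraic function of one variable is algebraic: there is a nonzero $P\in\R[X,Y]$ with $P(x,u(x))=0$ on $I$, and similarly $R(x,v(x))=0$. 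The identity $P(x,u(x))=0$ holds on an interval, and $u$ is real-analytic on $\R$, so the identity persists on all of $\R$.

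\emph{Main step: excluding algebraic, non-polynomial $u$ and $v$.} Consider the holomorphic functions $U(z)=\sum(\RePart a_n)z^n$ and $V(z)=\sum(\ImPart a_n)z^n$. They are entire, they restrict to $u$ and $v$ on $\R$, and $f=U+iV$. By the identity theorem, $P(z,U(z))\equiv0$ on $\C$, so $U$ is an entire function that is algebraic over $\C(z)$. Such a function is a polynomial. One way to see this is to write $P(z,U)=\sum_{j\le k}p_j(z)U^j$ with $p_k\ne0$. The standard Cauchy-type bound on roots of a polynomial then gives $|U(z)|\le 1+\max_j|p_j(z)/p_k(z)|$, which is polynomially bounded for large $|z|$, and Liouville's theorem forces $U$ to be a polynomial. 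Alternatively, an algebraic function has at worst a pole or a branch point at $\infty$, whereas a transcendental entire function has an essential singularity there. The same argument shows $V$ is a polynomial. Then $f=U+iV$ is a polynomial, contradicting the hypothesis.

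I expect the delicate point to be the reduction to $u|_I$ and $v|_I$ being algebraic functions, in particular making sure the graph structure forces the projection of $S$ to be an interval. I would handle this with semialgebraic cell decomposition or the monotonicity theorem. Once that is in place, the extension from $I$ to $\R$ and then to $\C$ via $U$ and $V$ is routine.
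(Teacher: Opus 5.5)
Your proposal is correct and follows essentially the same route as the paper: global subanalyticity from analyticity of $(u,v)$ near the compact interval, then injectivity of the first-coordinate projection to get an interval on which $u$ and $v$ have semialgebraic (hence algebraic) graphs, then the identity theorem, and finally the fact that entire algebraic functions are polynomials. The only variation is that you extend $u$ and $v$ separately to the entire functions $U(z)=\sum(\RePart a_n)z^n$ and $V(z)=\sum(\ImPart a_n)z^n$ and show each is a polynomial, which sidesteps the paper's step of deducing that $f=u+iv$ is itself algebraic over $\C(x)$; you also prove, rather than quote, that entire algebraic functions are polynomials, and the projection of $S$ need only be an interval with nonempty interior, not necessarily closed, which is harmless.
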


\begin{proof}
The map
\[
x\longmapsto (x,u(x),v(x))
\]
is real analytic on a neighborhood of the compact interval
$[\theta,\theta+1]$. Its graph $\Gamma_{f,\theta}$ is therefore globally
subanalytic.

Suppose that $\Gamma_{f,\theta}$ contains a connected positive-dimensional
semialgebraic subset $S$. Since $\Gamma_{f,\theta}$ is the graph of a
function of one real variable, $S$ has dimension one. The first-coordinate
projection
\[
\pi:S\longrightarrow\R,
\qquad
\pi(x,y,z)=x,
\]
is injective and semialgebraic. Hence $\pi(S)$ is a one-dimensional
semialgebraic subset of $\R$ and therefore contains a nonempty open interval
$J$. For every $x\in J$, the unique point of $\Gamma_{f,\theta}$ above $x$
belongs to $S$. Consequently, the restrictions $u|_J$ and $v|_J$ have
semialgebraic graphs.

Since $u|_J$ and $v|_J$ are also real analytic, they are \textit{Nash functions} and
hence algebraic over $\R(x)$; see Bochnak, Coste, and Roy
\cite[Proposition~8.1.8]{BochnakCosteRoy1998}. It follows that
$f=u+iv$ is algebraic over $\C(x)$ on $J$. After replacing $J$ by a smaller nonempty open subinterval, if necessary,
and clearing denominators, there exists a nonzero polynomial
$P\in\C[X,Y]$ such that
\[
P(x,f(x))=0
\qquad(x\in J).
\]
By the identity theorem, the entire function $z\mapsto P(z,f(z))$ vanishes identically. Hence $f$ is algebraic over $\C(z)$. Since every entire algebraic function is a polynomial, it follows that $f$ is a polynomial, a contradiction.
\end{proof}

\subsection{Rational points of bounded height in an interval}

\begin{lemma}\label{lem:farey}
For every fixed $H_0\geq1$, there are constants $c>0$ and $Q_0\geq1$ such that
\[
\#\{r\in\Q\cap[0,1]:H_0\leq H(r)\leq Q\}
\geq cQ^2
\]
for every $Q\geq Q_0$.
\end{lemma}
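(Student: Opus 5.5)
The plan is to count reduced fractions $a/b$ with $0\leq a\leq b$ and $\gcd(a,b)=1$, using Euler's totient function. For $r=a/b$ in lowest terms with $0\leq a\leq b$ we have $H(r)=\max\{a,b\}=b$, apart from $r=0$, which has $H(0)=1$. So, up to the single point $r=0$, the rationals in $[0,1]$ of height exactly $b\geq2$ are the fractions $a/b$ with $1\leq a\leq b$ and $\gcd(a,b)=1$. There are exactly $\varphi(b)$ of them. Hence
\[
\#\{r\in\Q\cap[0,1]:H_0\leq H(r)\leq Q\}\;\geq\;\sum_{\max\{2,H_0\}\leq b\leq Q}\varphi(b).
\]

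The key step is a quadratic lower bound for $\Phi(Q):=\sum_{b\leq Q}\varphi(b)$. I would use the classical asymptotic $\Phi(Q)=\tfrac{3}{\pi^2}Q^2+O(Q\log Q)$. To keep things self-contained, I would instead prove $\Phi(Q)\geq cQ^2$ by an elementary sieve. Count the pairs $(a,b)$ with $1\leq a\leq b\leq Q$ and $\gcd(a,b)=1$. There are about $Q^2/2$ pairs in total. The number of pairs with a common prime factor $p$ is at most $Q^2/(2p^2)+O(Q/p)$, for primes $p\leq Q$. Summing over primes, we have $\sum_p 1/p^2<1/2$, and the error terms contribute $O(Q\log\log Q)$. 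This leaves at least $\bigl(\tfrac12-\tfrac12\sum_p p^{-2}\bigr)Q^2-O(Q\log\log Q)\geq c'Q^2$ for large $Q$.

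Finally, the terms with $b<\max\{2,H_0\}$ number at most $H_0^2$, a constant. So for $Q\geq Q_0$, with $Q_0$ large enough in terms of $H_0$ and the error term, the count is at least $c'Q^2-H_0^2-O(Q\log\log Q)\geq \tfrac{c'}{2}Q^2$. This gives the lemma with $c=c'/2$.

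The only mildly delicate point is the lower bound on $\Phi(Q)$. Citing the standard asymptotic (for example, Hardy--Wright, Theorem~330) is the cleanest route, and I would present the sieve only as a remark. One small thing to check is that non-integer $Q$ and $H_0$ cause no trouble. This is the case because we only sum over integers $b$ with $H_0\leq b\leq\lfloor Q\rfloor$, and $\lfloor Q\rfloor\geq Q/2$ for $Q\geq1$.
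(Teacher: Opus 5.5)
Your proposal is correct and follows essentially the same route as the paper: for $b\geq2$ the rationals in $[0,1]$ of height $b$ are exactly the $\varphi(b)$ reduced fractions with denominator $b$, the classical asymptotic $\sum_{b\leq Q}\varphi(b)=\frac{3}{\pi^2}Q^2+O(Q\log Q)$ supplies the quadratic lower bound, and the $O_{H_0}(1)$ rationals of height below $H_0$ are absorbed. Your optional sieve is also valid and is not needed; note that it only requires $\sum_p p^{-2}<1$, not the stronger bound $\sum_p p^{-2}<1/2$ you quote.
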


\begin{proof}
If $r=p/q\in(0,1)$ is written in lowest terms, then $0<p<q$ and
\[
H(r)=\max\{p,q\}=q.
\]
Consequently,
\[
\begin{aligned}
\#\{r\in\Q\cap[0,1]:H(r)\leq Q\}
&=
2+\sum_{2\leq q\leq Q}\varphi(q)\\
&=
1+\sum_{q\leq Q}\varphi(q),
\end{aligned}
\]
where the term $2$ accounts for the endpoints $0$ and $1$, and we have
used $\varphi(1)=1$. The classical summatory estimate for Euler's
totient function gives
\[
\sum_{q\leq Q}\varphi(q)
=
\frac{3}{\pi^2}Q^2+O(Q\log Q);
\]
see, for instance, \cite[Theorem~3.7]{Apostol1976}. It follows that
\[
\#\{r\in\Q\cap[0,1]:H(r)\leq Q\}
=
\frac{3}{\pi^2}Q^2+O(Q\log Q).
\]

Since $H_0$ is fixed, removing the finitely many rationals $r$ satisfying
$H(r)<H_0$ changes the preceding count by only $O_{H_0}(1)$. Therefore,
\[
\#\{r\in\Q\cap[0,1]:H_0\leq H(r)\leq Q\}
=
\frac{3}{\pi^2}Q^2+O_{H_0}(Q\log Q).
\]
Because $Q\log Q=o(Q^2)$, there exists $Q_0=Q_0(H_0)$ such that
\[
\#\{r\in\Q\cap[0,1]:H_0\leq H(r)\leq Q\}
\geq \frac{1}{\pi^2}Q^2
\]
for every $Q\geq Q_0$. Thus the assertion holds, for example, with
$c=1/\pi^2$.
\end{proof}

We are now in a position to prove the main results.

\section{Proofs and consequences}\label{S3}

\subsection{Proof of Theorem~\ref{thm:sparsity}}

Let
\[
\mathcal R(Q)
:=
\left\{
\begin{array}{c}
r\in\Q\cap[0,1]:H(r)\leq Q,\\[1mm]
[\Q(f(\theta+r)):\Q]\leq D,\\[1mm]
H(f(\theta+r))\leq A H(r)^t
\end{array}
\right\}.
\]
For $r\in\mathcal R(Q)$, write
\[
\beta_r=f(\theta+r)
\]
and associate to $r$ the point
\[
P_r
:=
(\theta+r,\RePart\beta_r,\ImPart\beta_r)
\in\Gamma_{f,\theta}.
\]
Distinct rationals $r$ give distinct points $P_r$.

Set
\[
k:=\max\{[\Q(\theta):\Q],2D^2\}
\qquad\text{and}\qquad
s:=\max\{1,2t\}.
\]
By Lemma~\ref{lem:translate-height}, the first coordinate of $P_r$ has degree at most $k$ and height at most
\[
2H(\theta)Q.
\]
By Lemma~\ref{lem:real-imaginary}, the other two coordinates have degree at most $k$ and height at most
\[
4H(\beta_r)^2
\leq4A^2H(r)^{2t}
\leq4A^2Q^{2t}.
\]
Consequently, there is a constant $K=K(\theta,A)\geq1$ such that
\[
P_r\in\Gamma_{f,\theta}(k,KQ^s)
\qquad(r\in\mathcal R(Q)).
\]

Since $f$ is transcendental entire, it is not a polynomial. By
Lemma~\ref{lem:graph}, the set $\Gamma_{f,\theta}$ is globally subanalytic
and contains no connected positive-dimensional semialgebraic subset.
Applying Theorem~\ref{thm:pila} with exponent $\varepsilon/s$, we obtain
\[
\#\mathcal R(Q)
\leq
\#\Gamma_{f,\theta}(k,KQ^s)
\ll
(KQ^s)^{\varepsilon/s}
\ll Q^{\varepsilon}.
\]
This is the desired estimate.
\qed

\subsection{Proof of Theorem~\ref{thm:rigidity}}

Suppose first that $f$ is a transcendental entire function. By Lemma~\ref{lem:farey}, the number of rationals $r\in\Q\cap[0,1]$ satisfying $H_0\leq H(r)\leq Q$ is at least $cQ^2$ for all sufficiently large $Q$. Every one of these rationals satisfies \eqref{eq:intro-assumptions}. Thus
\[
\mathcal N_f(Q;\theta,D,t,A)
\geq cQ^2.
\]
This contradicts Theorem~\ref{thm:sparsity}, for instance with $\varepsilon=1$. Hence $f$ is a polynomial. If $f=0$, the conclusion is immediate. Assume henceforth that $f\neq0$, and write
\[
f(z)=c_0+c_1z+\cdots+c_mz^m.
\]
Choose $m+1$ distinct rationals $r_0,\ldots,r_m\in[0,1]$ of height at least $H_0$. By hypothesis, the numbers
\[
x_j=\theta+r_j
\qquad\text{and}\qquad
f(x_j)
\]
are algebraic. The coefficient vector $(c_0,\ldots,c_m)$ is obtained by solving the Vandermonde system
\[
\sum_{\ell=0}^{m}c_\ell x_j^\ell=f(x_j)
\qquad(0\leq j\leq m).
\]
Since its determinant is nonzero and all entries and right-hand sides are algebraic,
\[
c_0,\ldots,c_m\in\Qbar,
\]
so $f\in\Qbar[z]$.

It remains to prove the degree bound. If $m=0$, there is nothing to show. Assume $m\geq1$. By Lemma~\ref{lem:polynomial-height}, there is a constant $C_f$ such that
\[
h(f(\alpha))
\geq m h(\alpha)-C_f
\]
for every algebraic $\alpha$. By Lemma~\ref{lem:translate-height},
\[
h(\theta+r)
\geq h(r)-h(\theta)-\log2.
\]
Therefore, for every sufficiently large rational $r\in[0,1]$,
\[
\begin{aligned}
t h(r)+\log A
&\geq h(f(\theta+r))\\
&\geq m h(\theta+r)-C_f\\
&\geq m h(r)-m(h(\theta)+\log2)-C_f.
\end{aligned}
\]
Since rationals in $[0,1]$ have arbitrarily large height, this is possible only if $m\leq t$.
\qed

\subsection{Proof of Corollary~\ref{cor:rational}}

Restrict attention to reduced rationals $x=p/q\in[0,1]$. Since $f$ is continuous on $[0,1]$, there is $M\geq1$ such that
\[
|f(x)|\leq M
\qquad(0\leq x\leq1).
\]
Write $f(p/q)=a/d$ in lowest terms. The denominator hypothesis gives
\[
d\leq Aq^t.
\]
Moreover,
\[
|a|=d\left|f\left(\frac pq\right)\right|
\leq Md,
\]
and hence
\[
H\!\left(f\!\left(\frac pq\right)\right)
\leq M A q^t
=MAH(p/q)^t.
\]
Theorem~\ref{thm:rigidity}, with $\theta=0$ and $D=1$, yields
\[
f\in\Qbar[z]
\qquad\text{and}\qquad
\deg f\leq t.
\]
If $f=0$, there is nothing further to prove. Otherwise, let $m=\deg f$. Then $f(0),f(1),\ldots,f(m)$ are rational. Inverting the corresponding rational Vandermonde matrix shows that every coefficient of $f$ is rational. Thus $f\in\Q[z]$.
\qed

\begin{remark}\label{rem:previous}
Corollary~\ref{cor:rational} contains the exponent-one result of Lelis and Marques \cite{LelisMarques2020} and removes the rational-Taylor-coefficient assumption from the polynomial-exponent rigidity theorem of Lelis, Marques, Moreira, and Trojovsk\'y \cite{LelisMarquesMoreiraTrojovsky2024}. 
\end{remark}

\subsection{Proof of Corollary~\ref{cor:degree}}

Assume first that $f(\Aatmost{d})\subseteq\Aatmost{D}$. Since every rational number belongs to $\Aatmost{d}$, the asserted degree and height bounds apply to all rational $r\in[0,1]$ of sufficiently large height. Theorem~\ref{thm:rigidity}, with $\theta=0$, gives $f\in\Qbar[z]$ and $\deg f\leq t$.

Now suppose that $f(\Aexact{d})\subseteq\Aatmost{D}$. Choose a real algebraic number $\theta$ of degree exactly $d$. For every $r\in\Q$,
\[
\Q(\theta+r)=\Q(\theta),
\]
so $\theta+r\in\Aexact{d}$. By Lemma~\ref{lem:translate-height}, $H(\theta+r)\to\infty$ as $H(r)\to\infty$ and
\[
H(\theta+r)^t
\leq (2H(\theta))^tH(r)^t.
\]
Consequently, for all rational $r\in[0,1]$ of sufficiently large height,
\[
[\Q(f(\theta+r)):\Q]\leq D
\]
and
\[
H(f(\theta+r))
\leq A(2H(\theta))^tH(r)^t.
\]
Another application of Theorem~\ref{thm:rigidity} completes the proof.
\qed

\begin{remark}[Sharpness of the exponent]\label{rem:sharpness}
The degree bound in Theorem~\ref{thm:rigidity} is optimal. Indeed, if
$P\in\Qbar[z]$ has degree $m$ and $\theta\in\Qbar$, then
\[
h(P(\theta+r))
=
m h(r)+O_{P,\theta}(1)
\qquad (r\in\Q).
\]
Moreover, the values $P(\theta+r)$ all lie in a fixed number field.
Consequently, there exist $D\geq1$ and $C>0$ such that
\[
[\Q(P(\theta+r)):\Q]\leq D,
\qquad
H(P(\theta+r))\leq C H(r)^m
\]
for every $r\in\Q$. Thus the exponent $t=m$ is attained by polynomials of
degree $m$.
\end{remark}

\section*{Statements and Declarations}

\noindent\textbf{Funding.}
This work was supported by the National Council for Scientific and Technological Development (CNPq), under Grant No.~304467/2023-5.

\medskip

\noindent\textbf{Competing interests.}
The author has no relevant financial or non-financial interests to disclose.

\medskip

\noindent\textbf{Data availability.}
No datasets were generated or analysed during the current study.


\begin{thebibliography}{99}

\bibitem{Apostol1976}
T.~M.~Apostol,
\emph{Introduction to Analytic Number Theory},
Undergraduate Texts in Mathematics,
Springer-Verlag, New York--Heidelberg, 1976.

\bibitem{BochnakCosteRoy1998}
J.~Bochnak, M.~Coste, and M.-F.~Roy,
\emph{Real Algebraic Geometry},
Ergebnisse der Mathematik und ihrer Grenzgebiete, vol.~36,
Springer-Verlag, Berlin, 1998.

\bibitem{BombieriGubler2006}
E.~Bombieri and W.~Gubler,
\emph{Heights in Diophantine Geometry},
New Mathematical Monographs, vol.~4,
Cambridge University Press, Cambridge, 2006.

\bibitem{BombieriPila1989}
E.~Bombieri and J.~Pila,
The number of integral points on arcs and ovals,
\textit{Duke Math. J.} \textbf{59} (1989), no.~2, 337--357.

\bibitem{BoxallJones2015a}
G.~Boxall and G.~O.~Jones,
Algebraic values of certain analytic functions,
\textit{Int. Math. Res. Not. IMRN} \textbf{2015} (2015), no.~4, 1141--1158.

\bibitem{BoxallJones2015b}
G.~Boxall and G.~O.~Jones,
Rational values of entire functions of finite order,
\textit{Int. Math. Res. Not. IMRN} \textbf{2015} (2015), no.~22, 12251--12264.

\bibitem{ComteYomdin2018}
G.~Comte and Y.~Yomdin,
Zeroes and rational points of analytic functions,
\textit{Ann. Inst. Fourier (Grenoble)} \textbf{68} (2018), no.~6, 2445--2476.

\bibitem{LelisMarques2020}
J.~Lelis and D.~Marques,
On transcendental entire functions mapping $\Q$ into itself,
\textit{J. Number Theory} \textbf{206} (2020), 310--319.

\bibitem{LelisMarquesMoreiraTrojovsky2024}
J.~Lelis, D.~Marques, C.~G.~Moreira, and P.~Trojovsk\'y,
A note on transcendental analytic functions with rational coefficients mapping $\Q$ into itself,
\textit{Proc. Japan Acad. Ser. A Math. Sci.} \textbf{100} (2024), no.~8, 43--45.

\bibitem{LelisMoreiraSilva2026}
J.~Lelis, C.~G.~Moreira, and E.~Silva,
On $C^k$-functions mapping $\Q$ into itself and Mahler's problem on Liouville numbers,
preprint, arXiv:2607.24427, 2026.

\bibitem{Lombardo2017}
D.~Lombardo,
On the analytic bijections of the rationals in $[0,1]$,
\textit{Rend. Lincei Mat. Appl.} \textbf{28} (2017), no.~1, 65--83.

\bibitem{Mahler1984}
K.~Mahler,
Some suggestions for further research,
\textit{Bull. Aust. Math. Soc.} \textbf{29} (1984), 101--108.

\bibitem{Maillet1906}
E.~Maillet,
\emph{Introduction \`a la th\'eorie des nombres transcendants et des propri\'et\'es arithm\'etiques des fonctions},
Gauthier-Villars, Paris, 1906.

\bibitem{MarquesMoreira2015}
D.~Marques and C.~G.~Moreira,
On a variant of a question proposed by K.~Mahler concerning Liouville numbers,
\textit{Bull. Aust. Math. Soc.} \textbf{91} (2015), 29--33.

\bibitem{MarquesMoreira2017}
D.~Marques and C.~G.~Moreira,
A positive answer for a question proposed by K.~Mahler,
\textit{Math. Ann.} \textbf{368} (2017), no.~3--4, 1059--1062.

\bibitem{MarquesRamirezSilva2016}
D.~Marques, J.~Ramirez, and E.~Silva,
A note on lacunary power series with rational coefficients,
\textit{Bull. Aust. Math. Soc.} \textbf{93} (2016), no.~3, 372--374.

\bibitem{Pila1991}
J.~Pila,
Geometric postulation of a smooth function and the number of rational points,
\textit{Duke Math. J.} \textbf{63} (1991), no.~2, 449--463.

\bibitem{Pila2009}
J.~Pila,
On the algebraic points of a definable set,
\textit{Selecta Math. (N.S.)} \textbf{15} (2009), no.~1, 151--170.

\bibitem{PilaRodriguezVillegas1999}
J.~Pila and F.~Rodriguez Villegas,
Concordant sequences and integral-valued entire functions,
\textit{Acta Arith.} \textbf{88} (1999), no.~3, 239--268.

\bibitem{PilaWilkie2006}
J.~Pila and A.~J.~Wilkie,
The rational points of a definable set,
\textit{Duke Math. J.} \textbf{133} (2006), no.~3, 591--616.

\bibitem{Stackel}
P.~St\"ackel,
Ueber arithmetische Eigenschaften analytischer Functionen,
\textit{Math. Ann.} \textbf{46} (1895), 513--520.

\bibitem{Surroca2002}
A.~Surroca,
Sur le nombre de points alg\'ebriques o\`u une fonction analytique transcendante prend des valeurs alg\'ebriques,
\textit{C. R. Math. Acad. Sci. Paris} \textbf{334} (2002), no.~9, 721--725.

\bibitem{Surroca2006}
A.~Surroca,
Valeurs alg\'ebriques de fonctions transcendantes,
\textit{Int. Math. Res. Not.} \textbf{2006} (2006), Art. ID 16834, 31 pp.

\bibitem{vandenDries1998}
L.~van den Dries,
\emph{Tame Topology and O-Minimal Structures},
London Mathematical Society Lecture Note Series, vol.~248,
Cambridge University Press, Cambridge, 1998.

\bibitem{vanderPoorten1968}
A.~J.~van der Poorten,
Transcendental entire functions mapping every algebraic number field into itself,
\textit{J. Austral. Math. Soc.} \textbf{8} (1968), 192--193.

\bibitem{Waldschmidt2003}
M.~Waldschmidt,
Algebraic values of analytic functions,
\textit{J. Comput. Appl. Math.} \textbf{160} (2003), no.~1--2, 323--333.

\bibitem{Waldschmidt2020}
M.~Waldschmidt,
Integer-valued functions, Hurwitz functions and related topics: a survey,
in \emph{Number Theory: Proceedings of the Journ\'ees Arithm\'etiques, 2019, XXXI},
De Gruyter Proceedings in Mathematics,
De Gruyter, Berlin, 2022, pp.~61--82.

\bibitem{Weierstrass1923}
K.~Weierstrass,
Briefe von K.~Weierstrass an L.~Koenigsberger,
\textit{Acta Math.} \textbf{39} (1923), no.~1, 226--239.

\end{thebibliography}
\end{document}